\newtheorem{theorem}{Theorem}[section]
\newtheorem{proposition}[theorem]{Proposition}
\newtheorem{corollary}[theorem]{Corollary}
\newtheorem{lemma}[theorem]{Lemma}
\newtheorem{theorem-example}[theorem]{Theorem $\backslash$ Example}
\renewenvironment{proof}{\begin{sloppypar}\noindent{\bf Proof}}{\hfill$\blacksquare$\end{sloppypar}}
\theoremstyle{definition}
\newtheorem{remark}[theorem]{Remark}
\newtheorem{open problem}[theorem]{Open Problem}
\DeclareMathOperator{\Aut}{Aut}
\DeclareMathOperator{\Hom}{Hom}
\begin{document}

\title{\bf{Extending Characters of\\Fixed Point Algebras}}

\author{Stefan Wagner}

\maketitle

\begin{abstract}
\noindent
A dynamical system is a triple $(A,G,\alpha)$, consisting of a unital locally convex algebra $A$, a topological group $G$ and a group homomorphism $\alpha:G\rightarrow\Aut(A)$, which induces a continuous action of $G$ on $A$. Furthermore, a unital locally convex algebra $A$ is called a continuous inverse algebra, or CIA for short, if its group of units $A^{\times}$ is open in $A$ and the inversion map \mbox{$\iota:A^{\times}\rightarrow A^{\times}$}, $a\mapsto a^{-1}$ is continuous at $1_A$. For a compact manifold $M$, the Fr\'echet algebra of smooth functions $C^{\infty}(M)$ is the prototype of such a continuous inverse algebra. We show that if $A$ is a complete commutative CIA, $G$ a compact group and $(A,G,\alpha)$ a dynamical system, then each character of the fixed point algebra $A^G$ can be extended to a character of $A$. In particular, the natural map on the level of the corresponding spectra $\Gamma_A\rightarrow\Gamma_{A^G}$, $\chi\mapsto\chi_{\mid A^G}$ is surjective.

\noindent
\emph{Keywords:} Dynamical systems, continuous inverse algebras, characters, maximal ideals, fixed point algebras.        

\noindent
\emph{MSC2010}: 46J05, 46J20.
\end{abstract}

\pagenumbering{arabic}

\thispagestyle{empty}

\section{Introduction}

The origin of this paper lies in the question whether there is a way to translate the geometric concept of a principal bundle to noncommutative geometry. In the case of vector bundles the Theorem of Serre and Swan (cf. \cite{Swa62}) gives the essential clue: The category of vector bundles over a compact space $X$ is equivalent to the category of finitely generated projective modules of $C(X)$. This observation leads to a notion of noncommutative vector bundles and is the connection between the topological K-theory based on vector bundles and the K-theory for C$^*$-algebras. For principal bundles, free and proper actions offer a good candidate for a notion of noncommutative principal bundles (see e.\,g.  \cite{BHS07, BDH13, Ellwood00, Ph09, SchWa15a, SchWa15b}). In a purely algebraic setting, the well-established theory of Hopf--Galois extensions provides a wider framework comprising coactions of Hopf algebras (e.\,g. \cite{Haj04,LaSu05,Sch04}). We also would like to mention the related notion of noncommutative principal torus bundles proposed by Echterhoff, Nest, and Oyono-Oyono \cite{ENOO09} (see also \cite{HaMa10}), which relies on a noncommutative version of Green's Theorem. A similar geometric approach based on transformation groups was developed in \cite{Wa11d, Wa11e}. In this context the question came up under which conditions a continuous character of a fixed point algebra of a dynamical system can be extended to a continuous character of the original algebra. A classical result for finite group actions can be found in \cite[Chapter 5, \S 2.1, Corollary 4]{Bou89}. Our results include the class of compact group actions on commutative CIAs, which are naturally encountered in K-theory and noncommutative geometry, usually as dense unital subalgebras of C$^*$-algebras. In fact, we show that if $A$ is a complete commutative CIA, $G$ a compact group and $(A,G,\alpha)$ a dynamical system, then each character of the fixed point algebra $A^G$ is continuous and can be extended to a continuous character of $A$.

\subsection*{Preliminaries and notations}
All algebras are assumed to be complex if not mentioned otherwise. If $A$ is an algebra, we write $\Gamma_A:=\Hom_{\text{alg}}(A,\mathbb{C})\backslash\{\bf{0}\}$ for the spectrum of $A$ (endowed with the topology of pointwise convergence on $A$). 
%Moreover, a dynamical system is a triple $(A,G,\alpha)$, consisting of a unital locally convex algebra $A$, a topological group $G$ and a group homomorphism $\alpha:G\rightarrow\Aut(A)$, which induces a continuous action of $G$ on $A$. As we already mentioned before, a unital locally convex algebra $A$ is called a continuous inverse algebra, or CIA for short, if its group of units $A^{\times}$ is an open subset in $A$ and the inversion map $\iota:A^{\times}\rightarrow A^{\times},\,\,\,a\mapsto a^{-1}$ is continuous at $1_A$.
Moreover, given a dynamical system $(A,G,\alpha)$, we write $A^G:=\{a\in A:\,(\forall g\in G)\,\,\alpha(g).a=a\}$ for the corresponding fixed point algebra. Finally, we write $\widehat{G}$ for the set of all equivalence classes of finite-dimensional irreducible representations of a compact group $G$. 

\subsection*{Acknowledgments} 
We thank Henrik Sepp\"anen and Erhard Neher for useful discussions on this topic. 

\section{Extending characters on fixed point algebras}\label{extending characters on fixed point algebras}
%The discussion is inspired by [NeSe09], Lemma 3.1. 
In this section we state and prove our results on the extendability of characters on fixed point algebras. We first recall a useful result on the Fourier decomposition of dynamical systems with compact structure group. 

\begin{lemma}\emph{(}A structure theorem for dynamical systems\emph{)}.\label{str thm of G-mod}
Let $A$ be a complete unital locally convex algebra, $G$ a compact group and $(A,G,\alpha)$ a dynamical system. Further, given $\pi\in\widehat{G}$, let $\chi_{\pi}$ be the character of $\pi$ and $d_{\pi}:=\chi_{\pi}(1_G)$ the degree of $\pi$ \emph{(}i.e., the dimension of $\pi$\emph{)}. If
\[P_{\pi}(a):=d_{\pi}\cdot\int_G\overline{\chi_{\pi}}(g)\cdot(\alpha(g).a)\,dg,
\]where $a\in A$ and $dg$ denotes the normalized Haar measure on $G$, then the following assertions hold:

\begin{itemize}
\item[\emph{(a)}]
For each $\pi\in\widehat{G}$ the map $P_{\pi}:A\rightarrow A$ is a continuous $G$-equivariant projection onto the $G$-invariant subspace $A_{\pi}:= P_{\pi}(A)$. In particular, $A_{\pi}$ is algebraically and topologically a direct summand of $A$.

\item[\emph{(b)}]
The module direct sum $A_{\emph{\text{fin}}}:=\bigoplus_{\pi\in\widehat{G}}A_{\pi}$ is a dense subalgebra of $A$.
\end{itemize}
\end{lemma}

\begin{proof}
\,\,\,The assertions follow from \cite[Lemma 3.2 and Theorem 4.22]{HoMo06}
\end{proof}

%\begin{remark}(Harish--Chandra).\label{Harish-Chandra}
%If $G$ is a compact Lie group and $v$ a smooth vector for a complete locally convex continuous representations $(\pi,V)$ of $G$, then [War72], Theorem 4.4.2.1 implies the absolute convergence of the ``Fourier series" $\sum_{\pi\in\widehat{G}}P_{\pi}(v)$.
%\end{remark}

\begin{proposition}\label{extension of char on fixed point algebras I} 
Let $A$ be a complete unital locally convex algebra, $G$ a compact group and $(A,G,\alpha)$ a dynamical system. Further, let $A^G$ be the corresponding fixed point algebra. Then the following assertions hold:
\begin{itemize}
\item[\emph{(a)}]
If $I$ is a proper left ideal in $A^G$, then 
\[A_{\emph{\text{fin}}}\cdot I=\bigoplus_{\pi\in\widehat{G}}A_{\pi}\cdot I
\]defines a proper left ideal in $A_{\emph{\text{fin}}}$ which contains $I$.
\item[\emph{(b)}]
If $I$ is a proper closed left ideal in $A^G$ and $J$ is the closure of $A_{\text{fin}}\cdot I$ in $A_{\text{fin}}$,
% i.e.,\[J:=\overline{A_{\emph{\text{fin}}}\cdot I}^{A_{\emph{\text{fin}}}},\]
then $J$ is a proper closed left ideal in $A_{\emph{\text{fin}}}$ which contains $I$.
\end{itemize}
\end{proposition}

\begin{proof}
%First we show that the set $A_{\text{fin}}\cdot I$ is well-defined: Indeed, since the ideal $I$ is a subset of the fixed point algebra $B$, we obtain $A_{\pi} I\subseteq A_{\pi}$ for each $\pi\in\widehat{G}$. Now, Lemma \ref{E_fin subalgebra} and a short calculation shows that $A_{\text{fin}}\cdot I$ defines a left ideal of $A_{\text{fin}}$, which contains $I$.
\,\,\,(a) We first observe that $A^G$ coincides with $A_{\bf 1}$ (where {\bf 1} stands for the equivalence class of the trivial representation). Hence $I\subseteq A^G$ is contained in $A_{\text{fin}}$ and thus $A_{\text{fin}}\cdot I$ is the left ideal of $A_{\text{fin}}$ generated by $I$. Using the integral formula for $P_{\pi}$ from Lemma 2.1, we see that $A_{\pi}\cdot I\subseteq A_{\pi}$, entailing that the sum in part (a) is direct. To see that $A_{\text{fin}}\cdot I$ is proper, we assume the contrary, i.e., that \[1_A\in A_{\text{fin}}\cdot I=\bigoplus_{\pi\in\widehat{G}}A_{\pi}\cdot I.
\]Then $1_A\in A^G$ implies that $1_A\in A^G\cdot I=I$, which contradicts the fact that $I$ is a proper left ideal of $A^G$. Thus, $A_{\text{fin}}\cdot I$ is a proper ideal in $A_{\text{fin}}$ which contains $I$.

(b) Part (a) of the lemma and the definition of $J$ imply that $J$ is a closed left ideal in $A_{\text{fin}}$ which contains $I$. To see that $J$ is proper, we again assume the contrary, i.e., that $1_A\in J$. Then there exists a net $(a_{\alpha})_{\alpha\in\Gamma}$ in $A_{\text{fin}}\cdot I$ such that $\lim_{\alpha}a_{\alpha}=1_A$. Therefore, the continuity of the projection $P_{\bf 1}:A\rightarrow A$ onto $A^G$ leads to
\[1_A=P_{\bf 1}(1_A)=P_{\bf 1}(\lim_{\alpha}a_{\alpha})=\lim_{\alpha}P_{\bf 1}(a_{\alpha}).
\]Since $I$ is closed in $A^G$ and $P_{\bf 1}(a_{\alpha})\in A^G\cdot I=I$ for all $\alpha\in\Gamma$, we conclude that $1_A\in I$. This contradicts the fact that $I$ is a proper ideal of $A^G$. Thus, $J$ is a proper closed left ideal in $A_{\text{fin}}$ which contains $I$.
\end{proof}

\begin{lemma}\label{extension of char on fixed point algebras II}
Let $A$ be a topological algebra and $A'$ a dense subalgebra of $A$. If $I$ is a proper closed left ideal in $A'$, then $\overline{I}$ is a proper closed left ideal in $\overline{A'}=A$.
\end{lemma}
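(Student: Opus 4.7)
\medskip

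\noindent\textbf{Proof plan.} The statement amounts to three assertions about $\overline{I}$ (the closure taken in $A$): it is closed in $A$ (automatic), it is a left ideal of $A$, and it is proper. The middle assertion is a standard density-plus-continuity argument; the properness is the main content and is where the hypothesis that $I$ is \emph{closed} in $A'$ actually enters.

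To see that $\overline{I}$ is a left ideal, I will use joint continuity of multiplication in the locally convex algebra $A$ together with the density of $A'$, in two steps. First, for $a\in A'$ and $y\in\overline{I}$, approximate $y$ by a net in $I$ and use continuity of left multiplication by $a$ together with $A'\cdot I\subseteq I$ to conclude $ay\in\overline{I}$; this yields $A'\cdot\overline{I}\subseteq\overline{I}$. Second, for arbitrary $a\in A$, approximate $a$ by a net in $A'$ and use continuity of right multiplication by $y$ together with the first step to conclude $ay\in\overline{I}$. Hence $A\cdot\overline{I}\subseteq\overline{I}$.

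The key point is properness, which rests on the elementary observation that ``$I$ is closed in $A'$'' means, in terms of the subspace topology on $A'$,
\[
\overline{I}\cap A'\;=\;I.
\]
If $\overline{I}$ were equal to $A$, intersecting with $A'$ would give $A'=\overline{I}\cap A'=I$, contradicting that $I$ is a proper ideal of $A'$. Hence $\overline{I}\subsetneq A$, and together with the ideal property the proof is complete.

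I do not anticipate any real obstacle; the argument is purely formal. The only point requiring attention is to invoke joint continuity of multiplication in the ideal step, which is guaranteed by the locally convex algebra structure of $A$ assumed throughout the paper.
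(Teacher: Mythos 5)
Your proposal is correct and follows essentially the same route as the paper: the ideal property of $\overline{I}$ via density and continuity of multiplication (the paper's ``short calculation''), and properness via the identity $\overline{I}\cap A'=I$, which holds precisely because $I$ is closed in the subspace topology of $A'$. Nothing further is needed.
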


%\begin{lemma}\label{extension of char on fixed point algebras II}
%Let $A$ be a complete unital locally convex algebra and $A'$ be a dense unital subalgebra of $A$. If $I$ is a proper closed left ideal in $A'$, then $\overline{I}$ is a proper closed left ideal in $\overline{A'}=A$.
%\end{lemma}

\begin{proof}
\,\,\,A short calculation shows that $\overline{I}$ is a closed left ideal in $\overline{A'}=A$. Next, we note that $I=\overline{I}\cap A'$. Indeed, the inclusion $``\subseteq "$ is obvious and for the other inclusion we use the fact that $I$ is closed in $A'$. Thus, if $\overline{I}$ is not proper, i.e., $\overline{I}=A$, then $I=A'$ which contradicts the fact that $I$ is a proper ideal of $A'$. Hence, $\overline{I}$ is a proper closed left ideal in $A$.
\end{proof}

\begin{proposition}\label{extension of char on fixed point algebras III}\emph{(}Extending ideals\emph{)}.
Let $A$ be a complete unital locally convex algebra, $G$ a compact group and $(A,G,\alpha)$ a dynamical system. Then each proper closed left ideal in $A^G$ is contained in a proper closed left ideal in $A$.
%Let $A$ be a complete CIA, $G$ a compact group and $(A,G,\alpha)$ a dynamical system. Further, let $B$ denote the corresponding fixed point algebra. Then each proper left ideal $I$ in $B$ is contained in a proper left ideal $J$ in $A$.
\end{proposition}

\begin{proof}
\,\,\,If $I$ is a proper closed left ideal in $A^G$, then Proposition \ref{extension of char on fixed point algebras I} (b) implies that $I$ is contained in a proper closed left ideal in $A_{\text{fin}}$. Since $A_{\text{fin}}$ is a dense subalgebra of $A$ by Lemma \ref{str thm of G-mod} (b), the claim is a consequence of Lemma \ref{extension of char on fixed point algebras II}.
\end{proof}

%\begin{remark}
%The same argument works for compact groups acting on CIAs: Indeed, suppose that a compact group $G$ acts continuously on a complete CIA $A$. Then for each $b\in B$, the map $a\mapsto ab$ is $G$-equivariant, so that $A_{\pi}B\subseteq A_{\pi}$ for all $\pi\in\widehat{G}$. ({\bf ?})
%Yeah, the algebra A is a quotient of the tensor algebra by the left and right ideal generated by all relations thathold in A. The reprentation of K should thus be a quotient of a tensor representation. So it should suffice to prove it for tensor products of K-irreducibles, and for them it is true.
%\end{remark}

\begin{theorem}\label{extension of char on fixed point algebras IV}\emph{(}Extending characters\emph{)}.
Let $A$ be a complete commutative CIA, $G$ a compact group and $(A,G,\alpha)$ a dynamical system. %If $B$ is the corresponding fixed point algebra, 
Then each character $\chi:A^G\rightarrow\mathbb{C}$ is continuous and extends to a continuous character $\widetilde{\chi}:A\rightarrow\mathbb{C}$.
\end{theorem}

\begin{proof}
\,\,\,We first note that $A^G$ carries the structure of a CIA in its own right (cf. \cite[Proposition 1.7 (a)]{Bi10}). Therefore, given a character $\chi$ on $A^G$, it follows from \cite[Lemma 2.3]{Bi10} that $\chi$ is continuous and thus in turn that the kernel $I:=\ker\chi$ is a proper closed ideal in $A^G$. Now, Proposition \ref{extension of char on fixed point algebras III} implies that $I$ is contained in a proper closed ideal in $A$. In particular, it is contained in a proper maximal ideal $J$ of $A$. According to \cite[Lemma 2.2.2]{Bi04}, $J$ is the kernel of some character $\widetilde{\chi}:A\rightarrow\mathbb{C}$, which is again continuous by \cite[Lemma 2.3]{Bi10}. Since $I$ is a maximal ideal in the unital algebra $A^G$ and 
\[I=I\cap A^G\subseteq J\cap A^G\subseteq A^G,
\]we conclude that $I=J\cap A^G$. Hence, $A^G=I\oplus\mathbb{C}=(J\cap A^G)\oplus\mathbb{C}$ proves that $\widetilde{\chi}$ extends $\chi$.
\end{proof}

\begin{remark}
From a topological point of view it is a natural ambition to try to extend a continuous character on a fixed point algebra of some dynamical system to a continuous character on the original algebra. Unfortunately, it is not clear at all if this works for an arbitrary complete commutative unital locally convex algebra, because maximal ideals need in generally not be closed. For example the set $C_{\text{c}}^{\infty}(M)$ of compactly supported smooth functions on some non-compact manifold $M$ is a proper ideal in $C^{\infty}(M)$ and therefore contained in a proper maximal ideal which cannot be closed since $C_{\text{c}}^{\infty}(M)$ is dense. Thus, the reason for considering commutative CIAs in the previous theorem comes from the fact that they provide a class of commutative unital algebras for which all characters are automatically continuous (cf. \cite[Lemma 2.3]{Bi10}). Anyway, in the more general situation of a complete commutative unital locally convex algebra $A$, a similar argument as in the proof of Theorem \ref{extension of char on fixed point algebras IV} shows that each continuous character $\chi:A^G\rightarrow\mathbb{C}$ can always be extended to a character $\widetilde{\chi}:A\rightarrow\mathbb{C}$.
\end{remark}

\begin{corollary}
Suppose we are in the situation of Theorem \ref{extension of char on fixed point algebras IV}. Then the natural map on the level of the corresponding spectra $\Gamma_A\rightarrow\Gamma_{A^G}$, $\chi\mapsto\chi_{\mid A^G}$ is surjective.
\end{corollary}

\begin{proof}
\,\,\,The claim is a direct consequence of Theorem \ref{extension of char on fixed point algebras IV}.
\end{proof}

\begin{corollary}\label{extension of char on fixed point algebras V}
Let $P$ be a compact manifold and $G$ be a compact group. If $(C^{\infty}(P),G,\alpha)$ is a dynamical system, then each character $\chi:C^{\infty}(P)^G\rightarrow\mathbb{C}$ extends to a character $\widetilde{\chi}:C^{\infty}(P)\rightarrow\mathbb{C}$.
\end{corollary}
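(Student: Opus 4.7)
The plan is to recognise this corollary as a direct specialisation of Proposition \ref{extension of char on fixed point algebras IV}, so the only task is to verify that $C^{\infty}(P)$ satisfies the hypotheses of that proposition. Concretely, I need to check three things about $C^{\infty}(P)$: it is commutative, it is a complete locally convex algebra, and it is a continuous inverse algebra.

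Commutativity is obvious from the pointwise product. For completeness, I would recall that the Fr\'echet topology on $C^{\infty}(P)$ (defined via a countable family of seminorms coming from an atlas, or equivalently via uniform convergence of all derivatives in local charts) makes $C^{\infty}(P)$ into a Fr\'echet algebra, hence complete. For the CIA property, the key point is that a smooth function $f\in C^{\infty}(P)$ is invertible precisely when it vanishes nowhere on the compact manifold $P$; since $P$ is compact, this is equivalent to $\inf_{p\in P}|f(p)|>0$, and this is an open condition in the $C^0$-seminorm (hence in the Fr\'echet topology). Continuity of inversion at $1_{C^{\infty}(P)}$ then follows from the fact that on small neighbourhoods of the constant function $1$, inversion is given by a Neumann-type series that converges in all the defining seminorms, using the Leibniz rule to control derivatives of $1/f = 1/(1-(1-f))$. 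This is precisely the statement that $C^{\infty}(P)$ is the prototype CIA mentioned in the abstract.

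Having verified the hypotheses, I would then simply observe that the triple $(C^{\infty}(P),G,\alpha)$ is a dynamical system in the sense of the paper, so Proposition \ref{extension of char on fixed point algebras IV} applies directly and yields the desired extension $\widetilde{\chi}:C^{\infty}(P)\to\mathbb{C}$ of $\chi$. There is essentially no hard step here; the only substantive content was established in the preceding proposition. The one point worth writing down carefully, should the reader want it spelled out, is the CIA property of $C^{\infty}(P)$, but this is standard and in any case already highlighted in the introduction of the paper.
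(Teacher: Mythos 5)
Your proposal is correct and follows exactly the paper's own route: the corollary is deduced by applying Proposition \ref{extension of char on fixed point algebras IV} after noting that $C^{\infty}(P)$ is a complete commutative CIA. The extra detail you supply on why $C^{\infty}(P)$ is a CIA is fine but not needed beyond the one-line citation the paper itself gives.
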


\begin{proof}
\,\,\,This claim again directly follows from Theorem \ref{extension of char on fixed point algebras IV}, since $C^{\infty}(P)$ is a complete commutative CIA.
\end{proof}

\begin{remark}
(a) An application of Corollary \ref{extension of char on fixed point algebras V} can be found in \cite{Wa11d,Wa11e}. Indeed, there we show that the map
\[\Phi:P/G\rightarrow\Gamma_{C^{\infty}(P)^G},\,\,\,p.G\mapsto \delta_p
\]is a homeomorphism and Corollary \ref{extension of char on fixed point algebras V} is used to show its surjectivity (here, $\delta_p$ denotes the natural evaluation homomorphism in $p\in P$).
 
(b) Another application can be found in \cite{NeSe09}. There, \cite[Proposition 3.2]{NeSe09} is a special case of Theorem \ref{extension of char on fixed point algebras IV}.
\end{remark}

\vspace*{1cm}

\noindent
Stefan Wagner\\
Universit\"at Hamburg\\
Fachbereich Mathematik\\
Bereich Algebra und Zahlentheorie\\
Bundesstra\ss e 55 (Geomatikum)\\
20146 Hamburg, Germany \\
\url{stefan.wagner@uni-hamburg.de}

\end{document}